\documentclass[a4paper,11pt,twoside]{amsart}
\usepackage[T1]{fontenc}
\usepackage[utf8]{inputenc}
\usepackage[british]{babel}
\usepackage{amsmath,amssymb,amsthm,amscd}
\usepackage{enumerate}
\usepackage{verbatim}
\usepackage{lmodern}
\usepackage{color}
\usepackage{fullpage}
\usepackage{tikz-cd}
\usepackage{soul}

\theoremstyle{definition}

\newtheorem{thm}{Theorem}[section]
\newtheorem{dfn}[thm]{Definition}

\newtheorem{prp}[thm]{Proposition}

\author{Antje Dabeler}
\thanks{AD, MG, and TdL~were supported by the DFG -- Project-ID 427320536 -- SFB 1442, and under Germany's Excellence Strategy EXC 2044 390685587, Mathematics Münster: Dynamics--Geometry--Structure.}
\address{University of M\"unster, Mathematical Institute, Einsteinstra\ss{}e 62, 48149 M\"unster, Germany}
\email{antje.dabeler@uni-muenster.de \newline \phantom{------------------------------} maria.gerasimova@uni-muenster.de \newline \phantom{------------------------------} tim.delaat@uni-muenster.de}

\author{Emilie Mai Elki\ae{}r}
\thanks{}
\address{Department of Mathematics, University of Oslo, Norway}
\email{elkiaer@math.uio.no}

\author{Maria Gerasimova}
\thanks{}
\address{}
\email{}

\author{Tim de Laat}
\thanks{}
\address{}
\email{}

\title{Unitary $L^{p+}$-representations of almost automorphism groups}

\begin{document}

\begin{abstract}
Let $G$ be a locally compact group with an open subgroup $H$ with the Kunze--Stein property, and let $\pi$ be a unitary representation of $H$. We show that the representation $\widetilde{\pi}$ of $G$ induced from $\pi$ is an $L^{p+}$-representation if and only if $\pi$ is an $L^{p+}$-representation. We deduce the following consequence for a large natural class of almost automorphism groups $G$ of trees: For every $p \in (2,\infty)$, the group $G$ has a unitary $L^{p+}$-representation that is not an $L^{q+}$-representation for any $q < p$. This in particular applies to the Neretin groups.
\end{abstract}

\maketitle

\section{Induction of unitary $L^{p+}$-representations}

Let $G$ be a locally compact group with a closed subgroup $H$. Given a unitary representation $\pi$ of $H$, it can be induced to $G$ in order to get a unitary representation of $G$. Technically, this procedure is much easier if $H$ is not only closed, but also open in $G$. In that case, it is well known that the restriction $\widetilde{\pi} \vert_H$ (restricted to $H$ as a function) contains the representation $\pi$.

We briefly recall the construction of induced representations, and we refer to \cite{MR3012851,MR4249450} for further details. Let $H$ be an open subgroup of $G$, and let $\{t_i\}_{i \in I}$ be a maximal set in $G$ consisting of left coset representatives. Let $(\pi,V)$ be a unitary representation of $H$. Let $\widetilde{V}=\{ f \colon G \to V \mid f(gh)=\pi(h^{-1})f(g) \; \forall g \in G \; \forall h \in H \textrm{ and } \sum_{i \in I} \|f(t_i)\|^2 < \infty \}$. With respect to the inner product $\langle f,f' \rangle = \sum_{i \in I} \langle f(t_i),f'(t_i) \rangle$, this is a Hilbert space. The induced \hyphenation{rep-re-sen-ta-tion}representation $(\widetilde{\pi},\widetilde{V})$ is defined by $(\widetilde{\pi}(g)f)(g')=f(g^{-1}g')$.

Given a unitary representation $(\pi,V)$ of $G$, a function of the form $\pi_{v,w} \colon g \mapsto \langle \pi(g)v
,w \rangle$, where $v,w \in V$, is called a matrix coefficient of $\pi$. This short note is concerned with the behaviour of integrability properties of such matrix coefficients under the procedure of induction of representations from open subgroups.
\begin{dfn}
Let $p \in [1, \infty)$. A unitary representation $(\pi,V)$ of a locally compact group $G$ is an $L^{p}$-representation if there is a dense linear subspace 
$V_0 \subset V$ such that for all $v,w \in V_0$, we have $\pi_{v,w} \in L^p(G)$. The representation $\pi$ is an $L^{p+}$-representation if it is an $L^{p+\varepsilon}$-representation for all $\varepsilon > 0$.
\end{dfn}
The property of being an $L^p$-representation (or $L^{p+}$-representation) is preserved under induction of representations. We briefly recall the proof, which is based on \cite[Theorem 2.4]{MR3705441}, in which Wiersma shows this fact in the setting of discrete groups. The same proof works in our setting and we claim no originality here.
\begin{prp} \label{prp:inducedlp}
Let $G$ be a locally compact group, $H$ an open subgroup of $G$, and $p \in [1,\infty)$. Let $(\pi,V)$ be a unitary representation of $H$ and $(\widetilde{\pi},\widetilde{V})$ the associated induced representation. If $\pi$ is an $L^p$-representation (resp.~$L^{p+}$-representation), then $\widetilde{\pi}$ is an $L^p$-representation (resp.~$L^{p+}$-representation).
\end{prp}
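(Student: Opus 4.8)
The plan is to adapt the argument of \cite[Theorem 2.4]{MR3705441} to the locally compact setting. Since $\pi$ is an $L^p$-representation, fix a dense subspace $V_0 \subseteq V$ with $\pi_{v,w} \in L^p(H)$ for all $v,w \in V_0$. For $i \in I$ and $v \in V_0$, let $f_{i,v} \in \widetilde{V}$ be the unique element with $f_{i,v}(t_i) = v$ and $f_{i,v}(t_j) = 0$ for $j \neq i$; explicitly, $f_{i,v}(t_j h) = \delta_{ij}\,\pi(h^{-1})v$ for $h \in H$. Set $\widetilde{V}_0 = \operatorname{span}\{\, f_{i,v} : i \in I,\ v \in V_0 \,\}$. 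The map $f \mapsto (f(t_i))_{i \in I}$ identifies $\widetilde{V}$ isometrically with the Hilbert space direct sum $\bigoplus_{i \in I} V$, carrying $f_{i,v}$ to the vector with $v$ in the $i$-th coordinate and $0$ elsewhere; since $V_0$ is dense in $V$, this shows that $\widetilde{V}_0$ is dense in $\widetilde{V}$. As matrix coefficients are sesquilinear in the pair of vectors, it therefore suffices to show $\widetilde{\pi}_{f_{i,v},f_{j,w}} \in L^p(G)$ for all $i,j \in I$ and all $v,w \in V_0$.

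Next I would compute this matrix coefficient explicitly. Unwinding the definitions of $\widetilde{\pi}$ and of the inner product on $\widetilde{V}$ gives $\widetilde{\pi}_{f_{i,v},f_{j,w}}(g) = \langle f_{i,v}(g^{-1}t_j), w\rangle$, which vanishes unless $g^{-1}t_j \in t_iH$, that is, unless $g$ lies in the single left--right translate $t_j H t_i^{-1}$ of $H$. For such $g$, writing $g = t_j h t_i^{-1}$ with $h \in H$, one gets $g^{-1}t_j = t_i h^{-1}$ and hence $\widetilde{\pi}_{f_{i,v},f_{j,w}}(g) = \langle \pi(h)v, w\rangle = \pi_{v,w}(h)$. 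In other words, $\widetilde{\pi}_{f_{i,v},f_{j,w}}$ is supported on $t_j H t_i^{-1}$ and there equals $\pi_{v,w}$ precomposed with the bijection $t_j h t_i^{-1} \mapsto h$ of $t_jHt_i^{-1}$ onto $H$.

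It then remains to transport the integrability from $H$ to $G$. Here I would use that, since $H$ is open in $G$, the restriction to $H$ of a left Haar measure on $G$ is a left Haar measure on $H$, and $\Delta_G$ restricts to $\Delta_H$ on $H$. Changing variables $g = t_j h t_i^{-1}$ and combining the left-invariance of Haar measure on $G$ with its scaling behaviour under right translations (governed by the modular function) yields
\[
  \int_G \bigl|\widetilde{\pi}_{f_{i,v},f_{j,w}}(g)\bigr|^p\,dg \;=\; c_{ij}\int_H |\pi_{v,w}(h)|^p\,dh
\]
for a positive constant $c_{ij}$ (a value of the modular function of $G$), which is finite because $\pi_{v,w} \in L^p(H)$. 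This proves the $L^p$ statement. The $L^{p+}$ statement is then immediate: if $\pi$ is an $L^{p+\varepsilon}$-representation for every $\varepsilon > 0$, then applying the case just proved with $p$ replaced by $p+\varepsilon$ shows that $\widetilde{\pi}$ is an $L^{p+\varepsilon}$-representation for every $\varepsilon > 0$.

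The only step I expect to require genuine care is this change of variables: one must keep track of the modular function of $G$ (which need not be unimodular) and of the normalisation identifying Haar measures on the open subgroup $H$ with those on $G$. The identification of the support of $\widetilde{\pi}_{f_{i,v},f_{j,w}}$ and the density of $\widetilde{V}_0$ should be routine.
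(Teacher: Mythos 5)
Your proposal is correct and follows essentially the same route as the paper: the same dense subspace $\widetilde{V}_0$ spanned by vectors supported on single cosets $t_iH$, the same identification of the matrix coefficient $\widetilde{\pi}_{f_{i,v},f_{j,w}}$ with $\pi_{v,w}$ transported to $t_jHt_i^{-1}$, and the same change of variables producing the constant (which the paper computes explicitly as $\Delta(t_i^{-1})$). The only difference is presentational.
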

\begin{proof}
Let $V_0$ be a dense linear subspace of $V$ such that for all $v,w \in V_0$, the matrix coefficient $\pi_{v,w}$ lies in $L^p(H)$. Let $\{t_i\}_{i \in I}$ be as above, and let $\widetilde{V}_0$ be the subspace of $\widetilde{V}$ consisting of functions $f$ for which $f(t_i) \in V_0$ for all $i\in I$ and such that $f(t_i)$ is non-zero for only finitely many $i \in I$. Then $\widetilde{V}_0$ is a dense linear subspace of $\widetilde{V}$.

For each $i\in I$, denote by $\widetilde{V}_0^i$ the linear subspace of $\widetilde{V}$ consisting of all functions $f$ with support in $t_iH$ and such that $f(t_i)\in V_0$. Then $\widetilde{V}_0$ consists of finite linear combinations of functions from (different) spaces $\widetilde{V}_0^i$. For $i,j\in I$ and $f\in\widetilde{V}_0^i$ and $f'\in\widetilde{V}_0^j$, consider the matrix coefficient $\widetilde{\pi}_{f,f'}$. Set $w=f(t_i) \in V_0$ and $w'=f'(t_j) \in V_0$, and denote the modular function on $G$ (and its restriction to $H$) by $\Delta$. Then 
\begin{align*}
\|\widetilde{\pi}_{f,f'}\|^p_p &= \int_G \, \Big\lvert \langle \widetilde{\pi}(g)f,f' \rangle \Big\rvert^p \, d\mu_G(g) 
= \int_G \, \Big\lvert \sum_{k \in I} \langle f(g^{-1}t_k),f'(t_k) \rangle \Big\rvert^p \, d\mu_G(g) \\
&= \int_G \, \Big\lvert \langle f(g^{-1}t_j),w' \rangle \Big\rvert^p \, d\mu_G(g) 
= \Delta(t_i^{-1})\int_H \, \Delta(h^{-1}) \, \Big\lvert \langle f(t_ih),w' \rangle \Big\rvert^p \, d\mu_H(h) \\
&= \Delta(t_i^{-1})\int_H \, \Big\lvert \langle \pi(h)w,w' \rangle \Big\rvert^p \, d\mu_H(h) 
= \Delta(t_i^{-1}) \, \|\pi_{w,w'}\|^p_p < \infty.
\end{align*}
It follows that $\widetilde{\pi}$ is an $L^p$-representation.
\end{proof}

In general, it may happen that the induced representation has better integrability properties than the representation from which one induces, in the sense that the induced representation of an $L^p$-representation may be an $L^q$-representation for some $q < p$ (even if one considers the ``optimal'' value of $p$). Indeed, this for example happens if one induces $L^p$-representations of $\mathbb{F}_2$ for large $p$ to higher-rank simple Lie groups, since for every higher rank simple Lie group $G$, there exists $p(G) < \infty$ (depending on the group) such that every nontrivial unitary irreducible representation of $G$ is an $L^{p(G)}$-representation \cite[Th\'eor\`eme 2.4.2]{MR0560837}. This phenomenon is not well understood and we do not know whether it can actually happen in the case of induction from an open subgroup.

The main point of this note is to show that in a specific case, namely for representations of a locally compact group $G$ induced from representations of an open subgroup $H$ with the Kunze--Stein property, we know that the (optimal) $L^p$-integrability of a representation is preserved under induction. A locally compact group $G$ has the Kunze--Stein property if for every $p\in [1,2)$, the convolution product on $C_c(G)$ extends to a bounded bilinear map $L^p(G)\times L^2(G)\to L^2(G)$. This property originated from the work of Kunze and Stein \cite{MR0163988}, who proved it for $\mathrm{SL}(2,\mathbb{R})$.

\begin{thm}\label{thm:KSgp_inducedLpplus}
Let $G$ be a locally compact group, $H$ an open subgroup of $G$, and $p \in [1,\infty)$. Let $(\pi,V)$ be a unitary representation of $H$ and $(\widetilde{\pi},\widetilde{V})$ the associated induced representation of $G$. If $H$ has the Kunze-Stein property, then $\pi$ is an $L^{p+}$-representation of $H$ if and only if $\widetilde{\pi}$ is an $L^{p+}$-representation of $G$.
\end{thm}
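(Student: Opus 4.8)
The forward implication is Proposition~\ref{prp:inducedlp}, so it remains to prove that if $\widetilde{\pi}$ is an $L^{p+}$-representation of $G$, then $\pi$ is an $L^{p+}$-representation of $H$. The plan has three steps: first transport the hypothesis from $\widetilde{\pi}$ to the restriction $\widetilde{\pi}\vert_H$; then realise $\pi$ as a subrepresentation of $\widetilde{\pi}\vert_H$ whose matrix coefficients are literally restrictions of matrix coefficients of $\widetilde{\pi}$; and finally invoke the Kunze--Stein property of $H$ to descend from $\widetilde{\pi}\vert_H$ to $\pi$.

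For the first step, fix $\varepsilon>0$ and choose a dense subspace $\widetilde{V}_0\subset\widetilde{V}$ with $\widetilde{\pi}_{f,f'}\in L^{p+\varepsilon}(G)$ for all $f,f'\in\widetilde{V}_0$. Since $H$ is open in $G$, the Haar measure of $H$ is (a multiple of) the restriction of $\mu_G$ to $H$, so $\|\psi\vert_H\|_{L^{p+\varepsilon}(H)}\le\|\psi\|_{L^{p+\varepsilon}(G)}$ for every measurable function $\psi$ on $G$. As $\widetilde{\pi}\vert_H$ acts on the same Hilbert space $\widetilde{V}$, the subspace $\widetilde{V}_0$ remains dense and $(\widetilde{\pi}\vert_H)_{f,f'}=\widetilde{\pi}_{f,f'}\vert_H\in L^{p+\varepsilon}(H)$ for $f,f'\in\widetilde{V}_0$; hence $\widetilde{\pi}\vert_H$ is an $L^{p+}$-representation of $H$. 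For the second step, take the coset representative of $H$ to be $e$ and consider the map $V\to\widetilde{V}$ sending $v$ to the function $f_v$ supported on $H$ with $f_v(h)=\pi(h^{-1})v$; this is an isometric $H$-equivariant embedding of $(\pi,V)$ onto an orthogonal direct summand of $(\widetilde{\pi}\vert_H,\widetilde{V})$. Substituting $f_v$ and $f_w$ into $\widetilde{\pi}_{f_v,f_w}(g)=\sum_{i\in I}\langle f_v(g^{-1}t_i),f_w(t_i)\rangle$ and using that both functions vanish outside $H$ shows that $\widetilde{\pi}_{f_v,f_w}$ vanishes outside $H$ and restricts to $\pi_{v,w}$ on $H$; in particular $\|\pi_{v,w}\|_{L^q(H)}=\|\widetilde{\pi}_{f_v,f_w}\|_{L^q(G)}$ for all $q\in[1,\infty)$.

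The third step is the crux and the only place where the Kunze--Stein hypothesis is used. It remains to deduce that the subrepresentation $\pi$ of the $L^{p+}$-representation $\widetilde{\pi}\vert_H$ is itself an $L^{p+}$-representation, and this is not a formality: the dense subspace witnessing the $L^{p+}$-property of $\widetilde{\pi}\vert_H$ can miss the summand $\{f_v : v\in V\}$ entirely, and approximating $f_v$ by elements of $\widetilde{V}_0$ gives only uniform-on-compacta convergence of the corresponding matrix coefficients, which bounds no $L^{p+\varepsilon}$-norm. What saves the day is that, for a Kunze--Stein group $H$, the class of $L^{p+}$-representations is closed under passing to subrepresentations (equivalently, under weak containment). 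I would derive this closure property from the Kunze--Stein estimates $L^{q'}(H)\ast L^2(H)\subseteq L^2(H)$, $q'\in[1,2)$: interpolated over $q'$, these yield for each $q\in[2,\infty)$ a quantitative notion of $q$-temperedness which is trivially inherited by subrepresentations, and which on a Kunze--Stein group characterises the $L^{q+}$-representations once one combines it with the Cowling--Haagerup--Howe criterion for almost $L^2$ matrix coefficients (applied after raising matrix coefficients to a suitable power, i.e.\ after passing to a tensor power). Applying this closure property to the subrepresentation $\pi$ of $\widetilde{\pi}\vert_H$ shows that $\pi$ is an $L^{p+}$-representation of $H$, completing the proof.
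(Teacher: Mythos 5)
Your proof is correct and follows essentially the same route as the paper: the forward direction is Proposition~\ref{prp:inducedlp}, and for the converse you restrict to the open subgroup, observe that $\pi$ is contained in $\widetilde{\pi}\vert_H$, and use the Kunze--Stein property to pass to the subrepresentation. The closure property you identify as the crux of the third step is exactly what the paper invokes by citation, namely \cite[Proposition 4.3]{MR4398257}; the only difference is that you sketch a proof of that result (correctly, in the spirit of the Cowling--Haagerup--Howe argument) instead of quoting it.
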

\begin{proof}
One direction is Proposition \ref{prp:inducedlp}. For the other direction, suppose that $\widetilde{\pi}$ is an $L^{p+}$-representation of $G$. Then $\widetilde{\pi}|_H$ (restriction as a function) is an $L^{p+}$-representation of $H$. Because $\pi$ is contained in $\widetilde{\pi}|_H$ and $H$ has the Kunze-Stein property, it follows from Proposition \cite[Proposition 4.3]{MR4398257} that $\pi$ is itself an $L^{p+}$-representation.
\end{proof}
If we do not assume $H$ to have the Kunze--Stein property, the restriction $\widetilde{\pi}|_H$ might theoretically have better integrability properties than being an $L^{p+}$-representation. It would be interesting to have a better understanding of the relation between $L^{p+}$-representations of a group and $L^{p+}$-representations of its subgroups (also in the case of closed subgroups).

\section{Application to representations of almost automorphism groups of trees}
For $d \geq 3$, let $T_d$ be a $d$-regular tree and $\mathrm{Aut}(T_d)$ the automorphism group of $T_d$, which carries a natural non-discrete, totally disconnected, locally compact group topology. In \cite{MR1209033}, Neretin introduced the group $N_d$ of almost automorphisms of $T_d$. The Neretin groups $N_d$ also carry a natural non-discrete, totally disconnected, locally compact group topology, and $\mathrm{Aut}(T_d)$ embeds into $N_d$ as an open subgroup. It is known that the groups $N_d$ are simple \cite{MR1703086} and that they do not have any lattices \cite{MR2881324}. Recently, the Neretin groups have attracted increasing interest, also from the point of view of unitary representation theory (see e.g.~\cite{Neretin}). Notably, it was recently shown that the Neretin groups are not of type I \cite{CLBMB} (see also \cite{MR4449667}).

More generally, let $H$ be a non-compact, closed subgroup of $\mathrm{Aut}(T_d)$ acting transitively on the boundary $\partial T_d$ of $T_d$. Then $H$ satisfies the Kunze--Stein property \cite{MR0936361}. In \cite[Section 2]{MR3950641}, Lederle constructs the group $\mathcal{F}(H)$ of $H$-honest almost automorphisms, i.e.~almost automorphisms of $T_d$ that ``locally look like'' elements of $H$. She also shows that this group can be viewed as the topological full group of $H$ acting on $\partial T_d$. If it is moreover assumed that $H$ satisfies Tits' independence property, then the group $\mathcal{F}(H)$ carries a unique non-discrete, totally disconnected, locally compact group topology such that $H$ embeds into $\mathcal{F}(H)$ as an open subgroup \cite[Proposition 2.22]{MR3950641}. A special class of such groups $\mathcal{F}(H)$ is the class of coloured Neretin groups.

The unitary representation theory of $\mathrm{Aut}(T_d)$ goes back to \cite{MR0578650} (see also \cite{MR1152801}), and more generally, for subgroups $H$ of $\mathrm{Aut}(T_d)$ as above, it was described by Amann \cite{Amann}. It follows from Theorem \ref{thm:KSgp_inducedLpplus} and the above discussion that every unitary representation of $H$ can be induced to $\mathcal{F}(H)$ retaining the optimal $L^{p+}$-integrability of the representation.

We now make this more precise for the spherical complementary series of unitary representations under the additional assumption that the action of $H$ on $T_d$ is transitive. From the point of view of $L^{p+}$-representations, this part of the representation theory is most interesting. Indeed, for the groups $H$ under consideration, all non-spherical unitary irreducible representations are tempered (i.e.~they are $L^{2+}$-representations).

Let $H$ be a non-compact, closed subgroup of $\mathrm{Aut}(T_d)$ acting transitively on $T_d$ and on $\partial T_d$ and satisfying Tits' independence property. In what follows, we will use Amann's description \cite[Theorem 2]{Amann} of the unitary dual and refer to that text for details.

The unitary irreducible spherical representations are in one-to-one correspondence with the positive-definite spherical functions, and because the latter are diagonal matrix coefficients with respect to a cyclic vector, $L^p$-integrability (for some $p$) of the spherical function implies that the corresponding representation is an $L^p$-representation. As follows from (the proof of) \cite[Theorem 2]{Amann}, the positive-definite spherical functions associated with non-tempered representations are parametrised by the set $\mathcal{P}= \left[ 0,\frac{1}{2} \right)+ i\left\{ 0 ,\frac{\pi}{\log (d-1)} \right\}$ through a bijection
		\[
			\mathcal{P}\to \mathcal{SP}(K\backslash G/K), \; z \mapsto \varphi_{\gamma_o(z)}.
		\]
Let $\pi_{\gamma_0(z)}$ denote the (unique) spherical unitary irreducible representation associated with $\varphi_{\gamma_o(z)}$. For an explicit description of the spherical function $\varphi_{\gamma_o(z)}$, we refer to \cite{Amann}; we just recall its integrability properties. The following was shown in \cite[Lemma 4.11]{HdLS}: For $p\in (2,\infty)$ and $p' \in (1,2)$ with $\frac{1}{p}+\frac{1}{p'}=1$, the representation $\pi_{\gamma_o(z)}$ is an $L^{p+}(G)$-representation if and only if $\mathrm{Re} \, z \in \left[\frac{1}{p},\frac{1}{p'}\right]$. As a consequence of the above discussion and Theorem \ref{thm:KSgp_inducedLpplus}, we deduce the following.
\begin{thm}
Let $H$ be a non-compact, closed subgroup of $\mathrm{Aut}(T_d)$ acting transitively on $T_d$ and on $\partial T_d$ and satisfying Tits' independence property. Let $p \in (2,\infty)$ and $p' \in (1,2)$ be such that $\frac{1}{p} + \frac{1}{p'} = 1$. Then the representation of $\mathcal{F}(H)$ induced from $\pi_{\gamma_0(z)}$ is an $L^{p+}$-representation if and only if $\mathrm{Re} \, z \in \left[\frac{1}{p},\frac{1}{p'}\right]$. As a consequence, for every $p \in (2,\infty)$, the group $\mathcal{F}(H)$ has a unitary $L^{p+}$-representation that is not an $L^{q+}$-representation for any $q < p$. This gives an uncountable family of pairwise inequivalent non-tempered unitary representations of $\mathcal{F}(H)$.
\end{thm}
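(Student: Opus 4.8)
The plan is to combine the integrability input from \cite[Lemma 4.11]{HdLS} with the induction result of Theorem \ref{thm:KSgp_inducedLpplus}. First I would record that, under the standing hypotheses, the subgroup $H$ is a non-compact closed subgroup of $\mathrm{Aut}(T_d)$ acting transitively on $\partial T_d$, so by \cite{MR0936361} it has the Kunze--Stein property; moreover, since $H$ satisfies Tits' independence property, the group $\mathcal{F}(H)$ exists and contains $H$ as an \emph{open} subgroup by \cite[Proposition 2.22]{MR3950641}. This places us squarely in the setting where Theorem \ref{thm:KSgp_inducedLpplus} applies: for any unitary representation $(\pi,V)$ of $H$, the induced representation $\widetilde{\pi}$ of $\mathcal{F}(H)$ is an $L^{p+}$-representation if and only if $\pi$ is.

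Next I would specialise $\pi$ to the spherical unitary irreducible representation $\pi_{\gamma_o(z)}$ for $z \in \mathcal{P}$. The key fact, quoted from \cite[Lemma 4.11]{HdLS}, is that $\pi_{\gamma_o(z)}$ is an $L^{p+}(H)$-representation precisely when $\mathrm{Re}\,z \in \left[\frac{1}{p},\frac{1}{p'}\right]$. Feeding this equivalence into Theorem \ref{thm:KSgp_inducedLpplus} immediately yields the first assertion: the representation of $\mathcal{F}(H)$ induced from $\pi_{\gamma_o(z)}$ is an $L^{p+}$-representation if and only if $\mathrm{Re}\,z \in \left[\frac{1}{p},\frac{1}{p'}\right]$.

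For the second assertion, fix $p \in (2,\infty)$ and choose $z \in \mathcal{P}$ with $\mathrm{Re}\,z = \frac{1}{p}$ (such $z$ exists since $\frac{1}{p} \in \left(0,\frac{1}{2}\right)$). Then the induced representation is an $L^{p+}$-representation. Suppose it were an $L^{q+}$-representation for some $q < p$; if $q \le 2$ this contradicts non-temperedness, and if $q \in (2,p)$ then with $\frac{1}{q}+\frac{1}{q'}=1$ we would need $\mathrm{Re}\,z \in \left[\frac{1}{q},\frac{1}{q'}\right]$, but $\frac{1}{q} > \frac{1}{p} = \mathrm{Re}\,z$, a contradiction. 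Hence the induced representation is not an $L^{q+}$-representation for any $q < p$. Finally, letting $\mathrm{Re}\,z$ range over the uncountable interval $\left(0,\frac{1}{2}\right)$ produces uncountably many such representations; they are pairwise inequivalent because distinct parameters $z$ give representations with distinct optimal integrability exponents (equivalently, one invokes the injectivity of $z \mapsto \varphi_{\gamma_o(z)}$ together with the fact that induction from an open subgroup preserves enough structure to distinguish them, e.g.\ via the restriction to $H$ containing $\pi_{\gamma_o(z)}$), and they are non-tempered since $\mathrm{Re}\,z < \frac{1}{2}$ forces the optimal $p$ to be strictly greater than $2$.

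The main obstacle I anticipate is the pairwise inequivalence claim: one must argue that two induced representations $\widetilde{\pi_{\gamma_o(z)}}$ and $\widetilde{\pi_{\gamma_o(z')}}$ with $z \ne z'$ are not unitarily equivalent. The cleanest route is to observe that if they were equivalent, their restrictions to the open subgroup $H$ would be equivalent, and each such restriction contains the corresponding spherical representation $\pi_{\gamma_o(z)}$ as a subrepresentation; combined with the fact that different $z \in \mathcal{P}$ (with $\mathrm{Re}\,z$ ranging over $\left(0,\frac{1}{2}\right)$) yield spherical representations with different optimal integrability exponents by \cite[Lemma 4.11]{HdLS}, and hence yield pairwise inequivalent representations of $H$, one concludes that the induced representations of $\mathcal{F}(H)$ must also be pairwise inequivalent. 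This reduces the inequivalence over $\mathcal{F}(H)$ to the already-known inequivalence over $H$.
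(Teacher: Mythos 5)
Your proposal is correct and follows essentially the same route as the paper, which deduces the theorem directly from Theorem \ref{thm:KSgp_inducedLpplus} together with the Kunze--Stein property of $H$ \cite{MR0936361}, the openness of $H$ in $\mathcal{F}(H)$ \cite[Proposition 2.22]{MR3950641}, and the integrability criterion of \cite[Lemma 4.11]{HdLS}. Your treatment of the pairwise inequivalence via distinct optimal integrability exponents (for $\mathrm{Re}\,z$ ranging over $\left(0,\frac{1}{2}\right)$) is a sound way to fill in a detail the paper leaves implicit.
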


A version of the above theorem can also be formulated if the action of $H$ on $T_d$ is not transitive. However, due to an inaccuracy in the description of the unitary dual in \cite{Amann} in that case (as pointed out in e.g.~\cite[Remark 3.2.23]{Semal}), the parametrisation of the spherical complementary series of $H$ requires some more care.

\section{Remark on $L^{p+}$-group-$C^*$-algebras}
In the previous section, we constructed $L^{p+}$-representations of $\mathcal{F}(H)$ that are not $L^{q+}$-representations for $q < p$. It is interesting to point out that by $C^{\ast}$-algebraic methods, it follows that the $L^{p+}$-group-$C^*$-algebras $C^*_{L^{p+}}(\mathcal{F}(H))$ of $\mathcal{F}(H)$ must be pairwise canonically distinct for $p \in [2,\infty)$. More precisely, since $H$ is an open subgroup of $\mathcal{F}(H)$, it follows from Proposition \ref{prp:inducedlp}, \cite[End of Section 3]{MR3418075} and \cite[Theorem A]{HdLS} that for every $2 \leq q < p < \infty$, the canonical quotient map $C^*_{L^{p+}}(\mathcal{F}(H)) \to C^*_{L^{q+}}(\mathcal{F}(H))$ is not injective. Although this gives some information about the existence of $L^{p+}$-representations, this method does not give information about the optimal $L^{p+}$-integrability of specific representations (such as induced representations).

\section*{Acknowledgements}
\noindent We thank Pierre-Emmanuel Caprace and Sven Raum for useful comments and for pointing out \cite{MR4449667}. We also thank Pierre-Emmanuel Caprace for pointing out the inaccuracy in \cite{Amann}.

\end{document}